\newtheorem{thm}{Theorem}
\newtheorem{lem}[thm]{Lemma}
\newtheorem{rem}{Remark}
\begin{document}

\nocite{*}

\title{\bf On the Distribution of Products \\ of Primes and Powers}

\author{\textsc{Adrian W. Dudek} \\ 
Mathematical Sciences Institute \\
The Australian National University \\ 
\texttt{adrian.dudek@anu.edu.au} \\ \ \\
}
\date{}

\maketitle

\begin{abstract}
\noindent We prove several results regarding the distribution of numbers that are the product of a prime and a $k$-th power. First, we prove an asymptotic formula for the counting function of such numbers; this generalises a result of E. Cohen. We then show that the error term in this formula can be sharpened on the assumption of the Riemann hypothesis. Finally, we prove an asymptotic formula for these counting functions in short intervals.
\end{abstract}

\section{Introduction}
It is the purpose of this paper to understand further the distribution of numbers of the type $p m^k$, where $p$ is a prime and $m$ and $k$ are positive integers. The case $k=2$ was considered by Cohen \cite{cohen} in 1962, and it was therein established that
$$\sum_{p m^2 \leq x} 1 = \zeta(2) \frac{x}{\log x} + O\bigg(\frac{x}{\log^2 x}\bigg)$$
where the sum is over primes $p$ and positive integers $m \geq 1$. As it is known that every number can be uniquely represented as the product of a square-free number and a square, it follows that the sum in the above equation counts each number $pm^2$ precisely once. Moreover, every number has a unique representation as the product of a $k$-free number and a $k$-th power, and so we define 
$$C_k(x) := \sum_{p m^k \leq x} 1.$$

Throughout this paper we call on standard results from analytic number theory. We will refer the reader to the appropriate section of Montgomery and Vaughan \cite{mv} for further details.

Our first objective is to furnish the following theorem.
\begin{thm} \label{main1}
Let $k \geq 2$ and let $C_k(x)$ count those numbers not exceeding $x$ that can be represented as the product of a prime and a $k$-th power. Then there exists a constant $c>0$ such that
$$C_k(x) = \zeta(k) \text{li}(x) + O \bigg( x \exp\bigg( - c \frac{(\log x)^{3/5} }{(\log \log x)^{1/5}} \bigg) \bigg).$$
\end{thm}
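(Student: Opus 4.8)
The plan is to count by fixing the $k$-th power. Since every integer of the form $pm^k$ has exactly one such representation,
$$C_k(x) = \sum_{m \le x^{1/k}} \pi\!\left(\frac{x}{m^k}\right),$$
and into this I would feed the prime number theorem with the Vinogradov--Korobov error term, $\pi(y) = \text{li}(y) + O\big(y\exp(-c_1(\log y)^{3/5}(\log\log y)^{-1/5})\big)$, and then sum.

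First split the range of $m$ at a parameter $M$, which I would take to be roughly $M = \exp\big((\log x)^{3/5}\big)$: large enough that $\sum_{m>M}m^{-k}\ll M^{-(k-1)}$ is small, yet sub-polynomial in $x$, so that for $m \le M$ one still has $\log(x/m^k) = \log x - k\log m \gg \log x$. For such $m$ the error in the prime number theorem is $\ll (x/m^k)\exp\big(-c_2(\log x)^{3/5}(\log\log x)^{-1/5}\big)$, and since $k \ge 2$ makes $\sum_m m^{-k}$ converge, these errors sum to $\ll x\exp\big(-c_2(\log x)^{3/5}(\log\log x)^{-1/5}\big)$. For the tail $M < m \le x^{1/k}$ I would use $\pi(x/m^k)\le x/m^k$, contributing $\ll \sum_{m>M} x/m^k \ll x/M^{k-1}\ll x/M$, comfortably inside the target; the $O(x^{1/k})$ values of $m$ with $x/m^k$ bounded are treated by hand and contribute only $O(x^{1/k}\log x)$ to both the $\pi$-sum and the $\text{li}$-sum.

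There remains the main term $\sum_{m\le M}\text{li}(x/m^k)$. I would complete this to $\sum_{m=1}^{\infty}\text{li}(x/m^k)$ — the added tail is negligible by the same estimates — and compare it term by term with $\zeta(k)\,\text{li}(x) = \sum_{m=1}^{\infty} m^{-k}\text{li}(x)$. Writing $\text{li}(y) = \int_0^y dt/\log t$ and substituting $t \mapsto t/m^k$ gives $\text{li}(x/m^k) = m^{-k}\int_0^x dt/(\log t - k\log m)$, so that
$$\text{li}(x/m^k) - m^{-k}\,\text{li}(x) = \frac{1}{m^k}\int_0^x \frac{k\log m}{(\log t - k\log m)\log t}\,dt,$$
and the task is to bound the sum of these quantities over all $m$, using $\log m \le \log M$ when $m \le M$ together with the convergence of $\sum_m m^{-k}\log m$.

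The main obstacle is precisely this last estimate. Each of the integrals above is of size comparable to $x\log m/(m^k\log^2 x)$, so summed crudely over $m$ it gives an error of order $x/\log^2 x$; obtaining the cancellation, or the sufficiently sharp uniform estimate, needed to bring this down to the Vinogradov--Korobov bound is the heart of the matter and where essentially all of the analysis lies. A cleaner-looking alternative is to work with the Dirichlet series: one has $\sum_n a_n n^{-s} = P(s)\,\zeta(ks)$ with $a_n = \#\{(p,m):pm^k=n\}$ and $P(s) = \sum_p p^{-s}$ the prime zeta function, so $C_k(x)$ is governed by the singularity of $P(s)\zeta(ks)$ at $s=1$; since $P(s) = -\log(s-1) + (\text{analytic})$ there and $\zeta(ks) = \zeta(k) + O(s-1)$, the leading singular part is $-\zeta(k)\log(s-1)$, which yields $\zeta(k)\,\text{li}(x)$, with the error controlled by the classical zero-free region for $\zeta$ — but isolating the genuinely non-analytic remainder of that singularity requires the same care as in the elementary treatment.
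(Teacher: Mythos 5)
Your decomposition $C_k(x)=\sum_{m}\pi(x/m^k)$, the splitting at $M=\exp((\log x)^{3/5})$, and your treatment of the prime number theorem errors and of the tail $m>M$ are all correct, and this is a genuinely different route from the paper, which works instead with $\sum_{nm^k\le x}\Lambda(n)=\sum_m\psi(x/m^k)$ so that the main term appears as the clean $\zeta(k)x$ before any logarithms enter. But the step you defer to the end is a genuine gap, and you should know that it cannot be closed: there is no cancellation to be found in $\sum_m\big(\text{li}(x/m^k)-m^{-k}\,\text{li}(x)\big)$. By your own identity each summand is, for $m\ge 2$, positive and asymptotic to $kx\log m/(m^k\log^2 x)$, so
$$\sum_{m\ge 1}\text{li}(x/m^k)-\zeta(k)\,\text{li}(x)\;\sim\;\frac{kx}{\log^2 x}\sum_{m\ge 2}\frac{\log m}{m^k}\;=\;\frac{-k\zeta'(k)\,x}{\log^2 x},$$
with $-k\zeta'(k)>0$. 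A one-signed secondary term of exact order $x/\log^2 x$ cannot be pushed below a bound of the shape $x\exp(-c(\log x)^{3/5}(\log\log x)^{-1/5})$ by any choice of $M$ or any sharper uniformity. What your reduction actually establishes is $C_k(x)=\sum_m\text{li}(x/m^k)+O\big(x\exp(-c(\log x)^{3/5}(\log\log x)^{-1/5})\big)$, whose main term differs from $\zeta(k)\,\text{li}(x)$ by $\sim -k\zeta'(k)x/\log^2 x$; the honest conclusion of your computation is that the statement, with main term $\zeta(k)\,\text{li}(x)$, cannot carry an error term smaller than $x/\log^2 x$.

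For the comparison you are owed: Lemma \ref{lemma1} and the removal of higher prime powers correctly give $\sum_{pm^k\le x}\log p=\zeta(k)x+O(xe^{-\delta(x)})$, and the entire difficulty you isolated is compressed into the paper's closing words ``by partial summation.'' That step is not a routine partial summation: the weight attached to the integer $n=pm^k$ is $\log p=\log n-k\log m$, not $\log n$, so dividing by $\log t$ under the Stieltjes integral produces $\sum\log p/\log(pm^k)$ rather than $\sum 1$, and the aggregate discrepancy between $1/\log p$ and $1/\log n$ is exactly the $-k\zeta'(k)x/\log^2 x$ you computed. Your Dirichlet-series remark points at the same phenomenon: the singular part of $P(s)\zeta(ks)$ at $s=1$ is $-\zeta(ks)\log(s-1)$ with the \emph{variable} factor $\zeta(ks)=\zeta(k)+k\zeta'(k)(s-1)+\cdots$, and the $(s-1)\log(s-1)$ piece contributes at scale $x/\log^2 x$. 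So do not hunt for the missing cancellation; either restate the result with main term $\sum_m\text{li}(x/m^k)$, or accept an error term of $O(x/\log^2 x)$ as in Cohen's original theorem.
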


Considering that the set of numbers of the form $p m^k$ includes the prime numbers, it is apparent that any improvement to the error term in Theorem \ref{main1} will require a sharper form of the zero-free region. One can also reduce the error term in the above theorem by assuming the Riemann hypothesis to be true; this is the assertion that all non-trivial zeroes $\rho$ of the Riemann zeta-function satisfy $\text{Re}(\rho) = 1/2$.

\begin{thm} \label{main2}
Assume the Riemann hypothesis. Let $k \geq 2$ and let $C_k(x)$ count those numbers not exceeding $x$ that can be represented as the product of a prime and a $k$-th power. Then we have that
$$C_2(x) = \zeta(2) \text{li}(x) + O( x^{1/2} \log^2 x)$$
and
$$C_k(x) = \zeta(k) \text{li}(x) + O( x^{1/2} \log x)$$
for all $k \geq 3$.
\end{thm}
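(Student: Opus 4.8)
The plan is to start from the elementary identity
$$C_k(x) = \sum_{m \le x^{1/k}} \pi\!\left(\frac{x}{m^k}\right),$$
valid because every $n \le x$ of the form $p m^k$ is counted exactly once on the right (with $\pi(y) = 0$ for $y < 2$, so that in fact only $m \le (x/2)^{1/k}$ contribute). Into each summand I would insert von Koch's theorem: under the Riemann hypothesis, $\pi(y) = \text{li}(y) + O(\sqrt{y}\,\log y)$ for $y \ge 2$ (see Montgomery and Vaughan \cite{mv}). This converts the identity into
$$C_k(x) = \sum_{m \le (x/2)^{1/k}} \text{li}\!\left(\frac{x}{m^k}\right) + O\!\left( \sum_{m \le (x/2)^{1/k}} \sqrt{\frac{x}{m^k}}\,\log\frac{x}{m^k} \right).$$
The error term is immediate: bounding $\log(x/m^k) \le \log x$ and extracting $\sqrt{x}$ leaves $\sqrt{x}\,\log x \sum_{m \le x^{1/k}} m^{-k/2}$, where $\sum_{m \ge 1} m^{-k/2}$ converges for $k \ge 3$ (giving $O(\sqrt{x}\,\log x)$), while for $k=2$ only $\sum_{m \le \sqrt{x}} m^{-1} \ll \log x$ is available (giving $O(\sqrt{x}\,\log^2 x)$). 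These match the error terms in the statement exactly, so everything reduces to showing that $\sum_{m \le (x/2)^{1/k}} \text{li}(x/m^k) = \zeta(k)\,\text{li}(x)$ to within an admissible error.

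For the main term I would first replace the truncated sum by the full series $\sum_{m \ge 1} \text{li}(x/m^k)$, which converges; the discarded tail involves only arguments $x/m^k < 2$, where $\text{li}$ has at worst a mild ($\log\log$-type) singularity at $1$ and is otherwise bounded, and comparing it with the convergent integral $x^{1/k}\int_0^2 \text{li}(u)\,u^{-1-1/k}\,du$ shows it is $O(x^{1/k}\log x) = O(\sqrt{x}\,\log x)$. It then remains to compare $\sum_{m \ge 1}\text{li}(x/m^k)$ with $\zeta(k)\,\text{li}(x) = \text{li}(x)\sum_{m \ge 1} m^{-k}$ term by term: for $m \le x^{1/(2k)}$ insert the expansion $\text{li}(y) = y/\log y + y/\log^2 y + O(y/\log^3 y)$ with $y = x/m^k$, develop $1/(\log x - k\log m)$ in powers of $\log m/\log x$, and sum the result against the known values $\sum_m m^{-k} = \zeta(k)$ and $\sum_m m^{-k}\log m = -\zeta'(k)$; the range $x^{1/(2k)} < m \le x^{1/k}$ contributes a negligible $O(x^{1/k}\log x)$.

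The hard part is this termwise comparison, and it is exactly where I would be wary of the stated error term. The leading terms cancel, but the second order does not: one finds $\text{li}(x/m^k) - m^{-k}\text{li}(x) \sim k\,x\log m/(m^k\log^2 x)$, and summing over $m \ge 2$ produces a term $-k\zeta'(k)\,x/\log^2 x$, with $-k\zeta'(k) > 0$ since $\zeta'(k) < 0$ for $k \ge 2$. Thus the computation naturally delivers
$$C_k(x) = \zeta(k)\,\text{li}(x) - k\zeta'(k)\,\frac{x}{\log^2 x} + O\!\left(\sqrt{x}\,\log^2 x\right)$$
(with error $O(\sqrt{x}\,\log x)$ for $k \ge 3$) — a genuine secondary main term of size $x/\log^2 x$, which exceeds the error claimed in the statement. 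To obtain the theorem as printed one would need to locate cancellation killing this $x/\log^2 x$ term; the only candidate I can see — the behaviour of $\text{li}(x/m^k)$ for $m$ near $x^{1/k}$, where the argument nears the singularity of $\text{li}$ — contributes only $O(x^{1/k}\log x)$, far too little. So the real obstacle is whether such cancellation exists; failing that, the correct statement carries the error $O(x/\log^2 x)$ together with the explicit secondary term $-k\zeta'(k)\,x/\log^2 x$.
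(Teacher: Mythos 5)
Your analysis is correct, and the obstruction you identify is real: there is no hidden cancellation. The identity $C_k(x)=\sum_{m\le x^{1/k}}\pi(x/m^k)$ is exact, von Koch's theorem controls the error sum exactly as you say, and the main term is therefore forced to be $\sum_m \text{li}(x/m^k)$, which differs from $\zeta(k)\,\text{li}(x)$ by
$$\sum_{m\ge 2}\Big(\text{li}(x/m^k)-m^{-k}\,\text{li}(x)\Big) \;=\; -k\zeta'(k)\,\frac{x}{\log^2 x}+O\Big(\frac{x}{\log^3 x}\Big).$$
Every summand here is positive (since $\text{li}(y)/y \approx 1/\log y$ is decreasing), so nothing is available to cancel it; the only delicate range, $m$ near $x^{1/k}$ where the argument of $\text{li}$ approaches its singularity, contributes $O(x^{1/k}\log x)$ as you note, which is far too small to matter. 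Already the single term $m=2$ gives $\text{li}(x/2^k)-2^{-k}\text{li}(x)\gg x/\log^2 x$. So the statement as printed, with main term $\zeta(k)\,\text{li}(x)$ and a square-root error, cannot hold; the correct formulation must keep $\sum_{m}\text{li}(x/m^k)$ as the main term, or equivalently carry the expansion $\zeta(k)x/\log x+(\zeta(k)-k\zeta'(k))x/\log^2 x+\cdots$.

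For comparison, the paper's route is to prove $\sum_{nm^k\le x}\Lambda(n)=\zeta(k)x+O(x^{1/2}\log^2 x\cdot\sum_{m\le x^{1/k}}m^{-k/2})$ by summing $\psi(x/m^k)$ — which is fine — and then to pass to $C_k(x)$ ``by partial summation.'' That is precisely where its argument breaks: the weight attached to the point $n=pm^k$ is $\log p=\log n-k\log m$, not $\log n$, so one may not simply divide by $\log n$ under the Stieltjes integral. Carried out correctly for each fixed $m$ (i.e.\ $\pi(y)=\theta(y)/\log y+\int_2^y \theta(t)\,t^{-1}\log^{-2}t\,dt$ with $y=x/m^k$, then summed over $m$), partial summation reproduces exactly your $\sum_m\text{li}(x/m^k)$ and hence the $-k\zeta'(k)x/\log^2 x$ term. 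The same defect affects Theorem \ref{main1}, whose claimed error term is likewise $o(x/\log^2 x)$. Your approach through $\pi$ and von Koch is the cleaner one precisely because it never conflates $\log p$ with $\log(pm^k)$, and the secondary term you exhibit should be reported as a correction to the theorem rather than as a gap in your own argument.
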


\begin{rem}
It seems curious that the case $k=2$ should evade the error term which appears in the general case of Theorem \ref{main2}. It would be interesting to see if somebody could prove the bound
$$C_2(x) = \zeta(2) \text{li}(x) + O( x^{1/2} \log x)$$
on the Riemann hypothesis.
\end{rem}

From Theorem \ref{main2}, one can count numbers of the type $p m^k$ in the interval $(x,x+h)$, where $h =o(x)$. Clearly, we have that
$$\sum_{x < p m^2 \leq x+h} 1 = C_2(x+h) - C_2(x) = \zeta(2) \int_x^{x+h} \frac{dt}{\log t} + O( x^{1/2} \log^{A(k)} x)$$
where $A(2)=2$ and $A(k)=1$ for $k \geq 3$. From the estimate
$$\int_x^{x+h} \frac{dt}{\log t} \gg \frac{h}{\log(x+h)},$$
we have that the asymptotic formula
$$C_k(x+h) - C_2(x)  \sim  \zeta(k) \int_x^{x+h} \frac{dt}{\log t} $$
holds for all $k \geq 2$ provided that $h(x)/(x^{1/2}\log^{A(k)+1} x) \rightarrow \infty$. 

Using an explicit formula that relates the distribution of primes to the zeroes of the Riemann zeta-function, we can improve these estimates as demonstrated in the following theorem.

\begin{thm} \label{main3}
Assume the Riemann hypothesis. Then we have that the asymptotic formula
$$C_k(x+h) - C_k(x)  \sim  \zeta(k) \int_x^{x+h} \frac{dt}{\log t} $$
holds for all $k \geq 2$ provided that $h(x)/( x^{1/2} \log^{A(k)} x) \rightarrow \infty$, where $A(2) = 2$ and $A(k)=1$ for all $k \geq 3$.
\end{thm}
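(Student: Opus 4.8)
The plan is to deduce Theorem~\ref{main3} from the classical explicit formula for $\pi(x)$ together with the decomposition $C_k(y)=\sum_{m\ge 1}\pi(y/m^k)$, the sum being finite since $\pi(y/m^k)=0$ as soon as $m^k>y/2$. Because $\int_x^{x+h}\frac{dt}{\log t}\asymp\frac{h}{\log x}$ throughout the admissible range, it is enough to show that $C_k(x+h)-C_k(x)=\zeta(k)\int_x^{x+h}\frac{dt}{\log t}+o\!\left(\frac{h}{\log x}\right)$ under the stated hypothesis. Writing
$$C_k(x+h)-C_k(x)=\sum_{m\ge 1}\left(\pi\!\left(\frac{x+h}{m^k}\right)-\pi\!\left(\frac{x}{m^k}\right)\right),$$
I would fix a cutoff $z=z(x,h)$ and treat the ranges $m\le z$ and $m>z$ separately. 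For $m>z$ the inner difference is at most $\lfloor(x+h)/m^k\rfloor-\lfloor x/m^k\rfloor\le h/m^k+1$, so this part contributes $\ll h z^{1-k}+x^{1/k}$; since $x^{1/k}=o(h/\log x)$ in the relevant range and the corresponding tail $\sum_{m>z}m^{-k}\int_x^{x+h}\frac{dt}{\log t}\ll h/(z^{k-1}\log x)$ of the would-be main term is equally negligible, a suitable $z$ disposes of everything with $m>z$.

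For $m\le z$ I would apply, under the Riemann hypothesis, the truncated explicit formula in the form
$$\pi(y+\delta)-\pi(y)=\int_y^{y+\delta}\frac{dt}{\log t}-\frac{1}{\log y}\sum_{|\gamma|\le T}\frac{(y+\delta)^{\rho}-y^{\rho}}{\rho}+(\text{lower-order terms}),$$
with $\rho=\tfrac12+i\gamma$ ranging over the non-trivial zeros of $\zeta$, taken with $y=x/m^k$ and $\delta=h/m^k$. The key algebraic observation is that $(y+\delta)^{\rho}-y^{\rho}=\big((x+h)^{\rho}-x^{\rho}\big)m^{-k\rho}$, so that after summing over $m\le z$ the zero-term becomes, up to the truncation remainders,
$$-\sum_{|\gamma|\le T}\frac{(x+h)^{\rho}-x^{\rho}}{\rho}\,\sum_{m\le z}\frac{m^{-k\rho}}{\log(x/m^k)}.$$
Choosing $z$ small enough that $\log(x/m^k)\asymp\log x$ for all $m\le z$, the inner $m$-sum is $\ll\frac{1}{\log x}\sum_{m\le z}m^{-k/2}$, which is $O(1/\log x)$ when $k\ge 3$ and $O((\log z)/\log x)$ when $k=2$; combined with $\big|((x+h)^{\rho}-x^{\rho})/\rho\big|\ll\min\!\big(hx^{-1/2},\,x^{1/2}|\gamma|^{-1}\big)$ and the standard estimate for $\sum_{|\gamma|\le T}\min(\cdots)$, this yields the quoted error term. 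The complementary main-term computation $\sum_{m\le z}\big(\text{li}((x+h)/m^k)-\text{li}(x/m^k)\big)=\zeta(k)\int_x^{x+h}\frac{dt}{\log t}+o(h/\log x)$ is elementary: substitute $t=u/m^k$, expand $1/\log(u/m^k)=\frac{1}{\log u}\big(1+O(\log m/\log u)\big)$, and use the convergence of $\sum_m m^{-k}\log m$ together with $\sum_{m>z}m^{-k}\ll z^{1-k}$.

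I expect the main obstacle to be the precise bookkeeping of logarithmic factors. One must balance the cutoff $z$ and the truncation height $T$ so that the truncation remainder (of total size $\ll(x/T)\log^2(xT)$ after summing over $m$) and the zero-term are simultaneously $o(h/\log x)$ under $h/(x^{1/2}\log^{A(k)}x)\to\infty$, with $A(2)=2$ and $A(k)=1$ for $k\ge 3$. This is exactly where the explicit formula, rather than a mere differencing of Theorem~\ref{main2}, is essential: the division by $\log y$ built into the passage from $\psi$ to $\pi$, and, for $k=2$, some additional care with $\sum_{m\le z}m^{-2\rho}$ (exploiting the oscillation $m^{-2i\gamma}$ rather than a term-by-term absolute bound), is what realizes the saving of one logarithm over the estimate $O(x^{1/2}\log^{A(k)}x)$ obtained by subtracting two instances of Theorem~\ref{main2}. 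Finally, for $h$ close to $x$ (say $h\ge x^{1-\varepsilon}$), where this choice of parameters degenerates, one simply invokes Theorem~\ref{main2} directly, whose error $O(x^{1/2}\log^{A(k)}x)$ is trivially $o(h/\log x)$ in that regime.
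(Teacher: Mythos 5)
Your overall architecture --- decomposing $C_k$ over $m$, applying an explicit formula to each difference $\pi((x+h)/m^k)-\pi(x/m^k)$, and exploiting the factorisation $(y+\delta)^\rho-y^\rho=\big((x+h)^\rho-x^\rho\big)m^{-k\rho}$ --- runs parallel to the paper's, and your handling of the tail $m>z$ and of the $m$-sum $\sum_{m\le z}m^{-k/2}$ (the source of the extra logarithm when $k=2$) is fine. The gap is in the sum over zeros, and it is not mere bookkeeping: with a \emph{sharp} cutoff and a truncated explicit formula the stated threshold cannot be reached. The truncation error $\ll (x/T)\log^2(xT)$ forces $T\gg x\log^2 x/h$, and then your term-by-term bound $\big|((x+h)^\rho-x^\rho)/\rho\big|\ll x^{1/2}/|\gamma|$ applied to the zeros with $x/h<|\gamma|\le T$ yields a contribution of order $x^{1/2}\big(\log^2 T-\log^2(x/h)\big)\asymp x^{1/2}\log x\log\log x$ when $h$ is near the threshold $x^{1/2}\log^{A(k)}x$. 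That is not $o(h)$ for $h=f(x)\,x^{1/2}\log x$ with $f$ tending to infinity slowly (say $f=\log\log\log x$), so your argument proves the theorem for $k\ge3$ only in the smaller range $h/(x^{1/2}\log x\log\log x)\to\infty$. Moreover, the mechanism you name for the saving of one logarithm --- ``the division by $\log y$ built into the passage from $\psi$ to $\pi$'' --- is illusory: that division rescales the main term from $h$ to $h/\log x$ and the error term by the same factor, so the admissible range of $h$ is unchanged. This is exactly why differencing Theorem \ref{main2} (already a $\pi$-level statement) loses a logarithm, and your truncated-formula computation with absolute values essentially reproduces that loss.

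The paper's remedy, which your proposal lacks, is smoothing: it replaces the sharp interval by the trapezoidal weight $w_{x,h,\Delta}$ and uses the explicit formula for $\psi_1(x)=\int_2^x\psi(t)\,dt$, whose zero sum converges absolutely, so no truncation error arises. The resulting weight on each zero is $S(\rho)/\Delta$ with $|S(\rho)|/\Delta\ll\min\big(hx^{-1/2},\;x^{1/2}/|\gamma|,\;x^{3/2}/(\Delta\gamma^2)\big)$; it is the third, $\gamma^{-2}$ regime that tames the zeros with $|\gamma|>x/\Delta$ at total cost $O(x^{1/2}\log x)$, and the choice $\Delta\asymp\big(h\,x^{1/2}\log^{A(k)}x\big)^{1/2}$ balances this against the $O(\Delta)$ loss from the sloped edges of the trapezoid. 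To salvage a sharp-cutoff argument you would need genuine cancellation in $\sum_{x/h<|\gamma|\le T}\big((x+h)^\rho-x^\rho\big)/\rho$, which is precisely what the smoothing manufactures; the oscillation of $m^{-2i\gamma}$ that you invoke for $k=2$ is aimed at the wrong sum (the $m$-sum rather than the $\gamma$-sum) and is in any case not carried out.
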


It is apparent that any improvement to Theorems \ref{main2} and \ref{main3} for the case where $k \geq 3$ will require more information on the ordinates of the zeroes of the Riemann zeta-function. That is, these are best possible on the Riemann hypothesis. The author is unsure, however, as to whether one could improve the conditional estimates for $k=2$, and so this can be considered as an open problem.

\section{Proofs}
\subsection{Proof of Theorem \ref{main1}}
Let $k \geq 2$. It is convenient to work with the von Mangoldt function
\begin{displaymath}
   \Lambda(n) = \left\{
     \begin{array}{ll}
       \log p  & : \hspace{0.1in} n=p^m, \text{ $p$ is prime, $m \in \mathbb{N}$}\\
       0   & : \hspace{0.1in} \text{otherwise}
     \end{array}
   \right.
\end{displaymath} 
and consider the weighted sum
$$C^*_k(x) := \sum_{n m^k \leq x} \Lambda(n).$$
It should be remarked that $C^*_k(x)$ will count some numbers (such as 36) more than once. We will attenuate this contribution later on.

\begin{lem} \label{lemma1}
Let $k \geq 2$ and let
$$C^*_k(x) = \sum_{n m^k \leq x} \Lambda(n).$$
Then there exists a constant $c>0$ such that
$$C^*_k(x) = \zeta(k) x + O \bigg( x \exp\bigg( - c \frac{(\log x)^{3/5} }{(\log \log x)^{1/5}} \bigg) \bigg).$$
\end{lem}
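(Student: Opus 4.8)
The plan is to decouple the two variables by summing over $m$ first. Writing $y = y(m) = x/m^k$, we have
$$C^*_k(x) = \sum_{m^k \leq x} \sum_{n \leq x/m^k} \Lambda(n) = \sum_{m \leq x^{1/k}} \psi(x/m^k),$$
where $\psi$ is the Chebyshev function. The strategy is then to insert the prime number theorem in the strong Vinogradov--Korobov form, namely $\psi(y) = y + O\big(y \exp(-c_0 (\log y)^{3/5}(\log\log y)^{-1/5})\big)$, for each $m$, sum the main terms to recover $\zeta(k) x$ up to the tail $\sum_{m > x^{1/k}} m^{-k} \ll x^{-(k-1)/k}$ (which is negligible), and then control the accumulated error terms.

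The bulk of the work is bounding $E := \sum_{m \leq x^{1/k}} y(m) \exp\big(-c_0 (\log y(m))^{3/5} (\log\log y(m))^{-1/5}\big)$. I would split the range of $m$ at a threshold, say $m \leq x^{1/(2k)}$ versus $x^{1/(2k)} < m \leq x^{1/k}$. In the first range, $y(m) \geq x^{1/2}$, so $\log y(m) \geq \tfrac12 \log x$; since the function $t \mapsto t \exp(-c_0 (\log t)^{3/5}(\log\log t)^{-1/5})$ is eventually increasing, each term is $\ll x \exp(-c_1 (\log x)^{3/5}(\log\log x)^{-1/5})$ for a slightly smaller $c_1$, and summing the factor $\sum_m m^{-k}$ (which converges for $k \geq 2$) against $x$ keeps us within the stated error. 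In the second range, I would use the trivial bound $\psi(y) \ll y$ directly, giving a contribution $\ll \sum_{x^{1/(2k)} < m \leq x^{1/k}} x/m^k \ll x \cdot (x^{1/(2k)})^{1-k} = x^{1/2 + 1/(2k)}$, which for $k \geq 2$ is at most $x^{3/4}$ and hence dwarfed by the main error term. Combining the two ranges with the main-term computation yields the claim.

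The main obstacle is purely bookkeeping: one must verify that replacing $\log y(m)$ by $\log x$ inside the Vinogradov--Korobov factor costs only a constant in the exponent, and that this is uniform over the relevant range of $m$. This hinges on the elementary monotonicity of $t \exp(-c_0(\log t)^{3/5}(\log\log t)^{-1/5})$ for large $t$, together with the observation that for $m$ in the first range $y(m)$ lies between $x^{1/2}$ and $x$, so $(\log y(m))^{3/5}(\log\log y(m))^{-1/5} \geq c' (\log x)^{3/5}(\log\log x)^{-1/5}$; absorbing $c'$ into the constant completes the estimate. No deeper input than the quoted form of the prime number theorem is needed, and the convergence of $\sum m^{-k}$ for $k \geq 2$ is exactly what makes the error term survive without degradation.
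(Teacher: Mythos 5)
Your proposal is correct and follows essentially the same route as the paper: the same split of the range of $m$ at $x^{1/(2k)}$, the Vinogradov--Korobov form of the prime number theorem applied where $x/m^k \geq x^{1/2}$ (with the same observation that the exponent only degrades by a constant there), and a power-saving bound on the remaining range. The only immaterial difference is that for $x^{1/(2k)} < m \leq x^{1/k}$ you invoke the Chebyshev bound $\psi(y) \ll y$ directly, whereas the paper swaps the order of summation and uses partial summation; both yield the same $O(x^{1/2+1/(2k)})$, which is dominated by the main error term.
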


\begin{proof}
We divide this into two sums
\begin{equation} \label{twosums}
C^*_k(x) = \sum_{\substack{n m^k \leq x \\ m\leq x^{1/2k}}} \Lambda(n)+\sum_{\substack{n m^k \leq x \\ m>x^{1/2k}}} \Lambda(n).
\end{equation}
It is known (see Equation (6.28) of \cite{mv} for example) that
$$\sum_{n \leq x} \Lambda(n) = x +O \bigg( x e^{-\delta(x)} \bigg)$$
where
\begin{equation} \label{pnterror}
\delta(x):= c \frac{(\log x)^{3/5} }{(\log \log x)^{-1/5}} 
\end{equation}
for some constant $c > 0$. Therefore, one may estimate the first sum by
\begin{eqnarray*} 
\sum_{\substack{n m^k \leq x \\ m\leq x^{1/2k}}} \Lambda(n) & = & \sum_{ m\leq x^{1/2k}} \sum_{n \leq x/m^k} \Lambda(n) \\
& = & \sum_{m \leq x^{1/2k}} \bigg( \frac{x}{m^k} + O\bigg( \frac{x}{m^k} e^{-\delta(x/m^k)}\bigg) \bigg) \\
& = & \zeta(k) x + O\bigg( x \sum_{m > x^{1/2k}} \frac{1}{m^k} \bigg) + O\bigg( x \sum_{m \leq x^{1/2k}} \frac{e^{-\delta(x/m^k)}}{m^k} \bigg).
\end{eqnarray*}
Clearly, the sum in the first error term can be bounded by comparison to the integral \textit{viz.}
$$\sum_{m > x^{1/2k}} \frac{1}{m^k} \ll \int_{x^{1/2k}}^{\infty} t^{-k} \ll x^{-1/2 + 1/2k}.$$
For the second error term, we use that fact that $m \leq x^{1/2k}$ to get that
\begin{eqnarray*}
\sum_{m \leq x^{1/2k}} \frac{e^{-\delta(x/m^k)}}{m^k} \ll e^{- \delta(x^{1/2})} \sum_{m \leq x^{1/2k}} \frac{1}{m^k} \ll   e^{- \delta(x^{1/2})}.
\end{eqnarray*}
Therefore, we have that
$$\sum_{\substack{n m^k \leq x \\ m\leq x^{1/2k}}} \Lambda(n) = \zeta(k) x + O(e^{- \delta(x^{1/2}) }).$$
It now remains to bound the second sum in (\ref{twosums}). As $m > x^{1/2k}$, it follows that the sum will only be over the prime powers $n$ with $n \leq x^{1/2}$. Therefore, we have that
\begin{eqnarray*}
\sum_{\substack{n m^k \leq x \\ m>x^{1/2k}}} \Lambda(n) \ll \sum_{n \leq x^{1/2}} \Lambda(n) \sum_{m^k \leq x/n} 1 \ll x^{1/2} \sum_{n \leq \sqrt{x}} \frac{\Lambda(n)}{n^{1/k}}.
\end{eqnarray*}
It follows by partial summation and the prime number theorem that
$$\sum_{\substack{n m^k \leq x \\ m>x^{1/2k}}} \Lambda(n) \ll x^{1/2+1/2k}.$$
This completes the proof of the lemma.
\end{proof}

\begin{rem}
One should note that the value of $c$ in Lemma \ref{lemma1} will not necessarily be the same as that in (\ref{pnterror}). This change arises from the fact that $\delta(x^{1/2}) \ll \delta(x)$. 
\end{rem}

We now prove Theorem \ref{main1} directly. Clearly, we may write
\begin{eqnarray*}
\sum_{p m^k \leq x} \log p & = & \sum_{n m^k \leq x} \Lambda(n) - \sum_{\substack{p^r m^k \leq x \\ r\geq 2}} \log p.
\end{eqnarray*}
By Lemma \ref{lemma1}, it follows immediately that
\begin{eqnarray*}
\sum_{p m^k \leq x} \log p & = &\zeta(k) x + O \bigg( x e^{-\delta(x)}\bigg)+ O\bigg( \sum_{\substack{p^r m^k \leq x \\ r\geq 2}} \log p\bigg)
\end{eqnarray*}
where $\delta(x)$ is as before for some $c>0$. We need to estimate the rightmost sum in the above equation. Clearly, we have
\begin{eqnarray*}
 \sum_{\substack{p^r m^k \leq x \\ r\geq 2}} \log p \ll \sum_{m \leq x^{1/k}} \sum_{r \geq 2} \sum_{p \leq (x/m^k)^{1/r}} \log p.
\end{eqnarray*}
By the prime number theorem, we have
\begin{eqnarray*}
 \sum_{\substack{p^r m^k \leq x \\ r\geq 2}} \log p & \ll & \sum_{m \leq x^{1/k}} \sum_{r \geq 2} \bigg(\frac{x}{m^k}\bigg)^{1/r} \\
 & \ll & x^{1/2}  \sum_{m \leq x^{1/k}} \frac{1}{m^{k/2}}.
 \end{eqnarray*}

Clearly, the sum in the above formula is $O(\log x)$ when $k=2$, and $O(1)$ for $k \geq 3$. Therefore, we have that
\begin{eqnarray*}
\sum_{p m^k \leq x} \log p & = &\zeta(k) x + O \bigg( x e^{-\delta(x)}\bigg)
\end{eqnarray*}
It follows by partial summation that
$$\sum_{p m^k \leq x} 1 = \zeta(k) \text{li}(x) + O (x e^{-\delta(x)}).$$

\subsection{Proof of Theorem \ref{main2}}

Assume the Riemann hypothesis. It follows (see Theorem 13.1 of \cite{mv}) that 
$$\psi(x) = x + O(x^{1/2} \log^2 x).$$
Working similarly to before, we have that
\begin{eqnarray*}
C^*_k(x) & = & \sum_{n m^k \leq x} \Lambda(n) \\
& = &  \sum_{m \leq x^{1/k}} \psi\bigg(\frac{x}{m^k}\bigg)\\
& = & x \sum_{m \leq x^{1/k}} \frac{1}{m^k} + O\bigg( x^{1/2} \log^2 (x) \sum_{m\leq x^{1/k}} \frac{1}{m^{k/2}}\bigg).
\end{eqnarray*}
We can deal with the first sum as before to get
\begin{eqnarray*}
 \sum_{m \leq x^{1/k}} \frac{1}{m^k}  & = & \zeta(k) + O(x^{1/k-1}).
 \end{eqnarray*}
Thus, we have that
 $$C^*_k(x) = \zeta(k) x +O\bigg( x^{1/2} \log^2 (x) \sum_{m\leq x^{1/k}} \frac{1}{m^{k/2}}\bigg).$$
If $k=2$, then the sum in the above equation is $O(\log x)$; otherwise, it is $O(1)$. To complete the proof, one simply needs to remove the contribution of powers of primes from $C^*_k(x)$ and apply partial summation as in the proof of Theorem \ref{main1}.

\subsection{Proof of Theorem \ref{main3}}

We define the weighted sum
$$\psi_1 (x) := \sum_{n \leq x} (x-n) \Lambda(n) = \int_2^x \psi(t) dt$$
and consider the well-known (see Equation (13.7) of \cite{mv}) explicit formula
\begin{equation} \label{explicit}
\psi_1(x) = \frac{x^2}{2} - \sum_{\rho} \frac{x^{\rho+1}}{\rho (\rho +1)} - \frac{\zeta'}{\zeta}(0) x + \frac{\zeta'}{\zeta}(-1) x + O(x^{-1})
\end{equation}
where the sum is over the non-trivial zeroes $\rho = \beta + i \gamma$ of the Riemann zeta-function $\zeta(s)$. Suppose that $2 \leq \Delta \leq h \leq x$. We define a weight function $w_{x,h,\Delta}(n)$ \textit{viz.}
\begin{displaymath}
   w_{x,h,\Delta}(n) = \left\{
     \begin{array}{ll}
     (n-x+\Delta)/\Delta & : \hspace{0.1in}  x-\Delta \leq n \leq x\\
       1 & : \hspace{0.1in}  x \leq n \leq x+h\\
      (x+h+\Delta-n)/\Delta & : \hspace{0.1in}  x+h \leq n \leq x+h+\Delta\\
       0   & : \hspace{0.1in} \text{otherwise.}
     \end{array}
   \right.
\end{displaymath} 
This function assumes the shape of an isosceles trapezoid, supported on the interval $(x-\Delta, x + h + \Delta)$, and constantly equal to $1$ on the interval $(x,x+h)$. One can use such a weight to study the distribution of primes in short intervals, with better error terms than a weight with a sharp cut-off. As such, we define the sum
$$S_{\Delta}(x,h) =\sum_{n} \Lambda(n) w_{x,h,\Delta}(n).$$
The following lemma connects this sum with the distribution of zeroes of the Riemann zeta-function.

\begin{lem} \label{dog}
Let $2 \leq \Delta \leq h \leq x$. Then
$$S_{\Delta}(x,h) = h + \Delta - \frac{1}{\Delta} \sum_{\rho} S(\rho) + O\bigg( \frac{1}{\Delta x} \bigg)$$
where 
$$S(\rho) :=\frac{ (x+h+\Delta)^{\rho+1} - (x+h)^{\rho+1} - x^{\rho+1} +(x-\Delta)^{\rho+1}}{\rho(\rho+1)}.$$
\end{lem}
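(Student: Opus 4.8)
The plan is to express $S_\Delta(x,h)$ as a linear combination of values of $\psi_1$ and then substitute the explicit formula \eqref{explicit}. The key observation is that the trapezoidal weight $w_{x,h,\Delta}$ is, up to scaling by $1/\Delta$, a second difference of the ramp function $t \mapsto \max(t,0)$: concretely, one checks that
$$
\Delta \, w_{x,h,\Delta}(n) = (n-(x-\Delta))_+ - (n-x)_+ - (n-(x+h))_+ + (n-(x+h+\Delta))_+,
$$
where $(t)_+ := \max(t,0)$. Summing against $\Lambda(n)$ and recalling that $\psi_1(y) = \sum_{n\le y}(y-n)\Lambda(n) = \sum_n (y-n)_+\Lambda(n)$, this gives immediately
$$
\Delta \, S_\Delta(x,h) = \psi_1(x-\Delta) - \psi_1(x) - \psi_1(x+h) + \psi_1(x+h+\Delta).
$$
Wait — the signs need care: one wants the combination that reproduces the trapezoid, so the correct identity is
$$
\Delta \, S_\Delta(x,h) = \psi_1(x+h+\Delta) - \psi_1(x+h) - \psi_1(x) + \psi_1(x-\Delta),
$$
which one verifies by evaluating the right-hand ramp combination at a generic $n$ in each of the four subintervals of the support.

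Next I would substitute \eqref{explicit} for each of the four occurrences of $\psi_1$. The main term $y^2/2$ contributes $\tfrac12\big[(x+h+\Delta)^2 - (x+h)^2 - x^2 + (x-\Delta)^2\big]$, which a direct expansion shows equals $\Delta(h+\Delta)$; dividing by $\Delta$ yields the claimed main term $h+\Delta$. The linear terms $(-\frac{\zeta'}{\zeta}(0) + \frac{\zeta'}{\zeta}(-1))y$ contribute a constant times $\big[(x+h+\Delta) - (x+h) - x + (x-\Delta)\big] = 0$, so they vanish identically. The zero-sum terms $-\sum_\rho \frac{y^{\rho+1}}{\rho(\rho+1)}$ combine, after dividing by $\Delta$, into exactly $-\frac{1}{\Delta}\sum_\rho S(\rho)$ with $S(\rho)$ as defined. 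Finally the four $O(y^{-1})$ error terms are each $O(x^{-1})$ since all four arguments are comparable to $x$ (using $\Delta \le h \le x$, so $x - \Delta \ge 0$ and all arguments lie in $[\,\Theta(x), O(x)\,]$ — strictly one should note $x-\Delta$ could be small if $\Delta$ is close to $x$, but since $\Delta\le h\le x$ we have $x-\Delta\ge 0$; if $x-\Delta$ is bounded the term $\psi_1(x-\Delta)$ is itself $O(1)$, which is absorbed after division by $\Delta\ge 2$ into the stated $O(1/(\Delta x))$ bound, or more simply one assumes $\Delta\le x/2$ as is the case in the application). Dividing the whole identity through by $\Delta$ produces exactly the statement of the lemma.

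The only genuinely delicate point is the legitimacy of interchanging the sum over zeros with the linear combination — i.e., that $\sum_\rho S(\rho)$ converges and that grouping the four terms $\frac{y^{\rho+1}}{\rho(\rho+1)}$ before summing over $\rho$ is valid. This follows because, under the Riemann hypothesis, $|y^{\rho+1}| = y^{3/2}$ for each argument $y\asymp x$, so each individual series $\sum_\rho \frac{y^{\rho+1}}{\rho(\rho+1)}$ converges absolutely (the zeros satisfy $\sum_\rho |\rho(\rho+1)|^{-1} < \infty$), and hence the finite linear combination can be taken termwise inside the sum. The bound on $S(\rho)$ itself — showing it is small when $\beta = 1/2$ — is not needed for this lemma; it will be the content of the subsequent analysis where $\sum_\rho S(\rho)$ is estimated to prove Theorem \ref{main3}. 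So in the proof of this lemma, the main obstacle is really just bookkeeping: getting the four signs and the algebraic identity $\tfrac12[(x+h+\Delta)^2-(x+h)^2-x^2+(x-\Delta)^2]=\Delta(h+\Delta)$ exactly right, and correctly tracking that the constant and linear terms telescope to zero.
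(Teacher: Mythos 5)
Your proof is correct and follows essentially the same route as the paper: write $\Delta\, S_\Delta(x,h)$ as the four-term combination $\psi_1(x+h+\Delta)-\psi_1(x+h)-\psi_1(x)+\psi_1(x-\Delta)$ and substitute the explicit formula (\ref{explicit}), checking that the quadratic terms give $\Delta(h+\Delta)$ and the linear terms cancel. In fact your signs are the correct ones --- the identity displayed in the paper's proof, with $-2\psi_1(x+h)$ and $-\psi_1(x-\Delta)$, is a typo, as one sees from the form of $S(\rho)$ in the lemma's statement --- and your remarks on absolute convergence of the sum over zeros and on the $O(1/(\Delta x))$ error are fine.
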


\begin{proof}
One can confirm the identity
\begin{eqnarray*} \label{weighted}
\sum_{n} \Lambda(n) w_{x,h,\Delta}(n) & = & \frac{1}{\Delta} (\psi_1(x+h+\Delta) - 2 \psi_1(x+h) - \psi_1 (x) - \psi_1(x-\Delta)).
\end{eqnarray*}
by expanding the sums on the left hand side. From here, it remains to apply the explicit formula (\ref{explicit}).

\end{proof}

Let $m \geq 1$ and $k \geq 2$. The sum $S_{\Delta/m^k} (x/m^k,h/m^k)$ counts the number of prime powers (with weight) in the interval $((x-\Delta)/m^k,(x+h+\Delta)/m^k)$. This is equal to the count of numbers $p^r m^k$ in the interval $(x-\Delta, x+h+\Delta)$, though it is a small matter to later remove the contribution from higher prime powers as well as numbers outside of the interval $(x,x+h)$. Therefore, we are interested in the sum
$$\sum_{x-\Delta < n m^k <x+ h + \Delta} w_{x,h,\Delta}(n) \Lambda(n) =  \sum_{m^k < h} S_{\Delta/m^k} (x/m^k,h/m^k).$$
A direct application of Lemma \ref{dog} gives
\begin{eqnarray} \label{bigformula}
\sum_{x-\Delta < n m^k <x+ h + \Delta} w_{x,h,\Delta}(n) \Lambda(n) & =  & (h + \Delta) \sum_{m^k < h} \frac{1}{m^k} - \frac{1}{\Delta} \sum_{m^k < h} \sum_{\rho} \frac{S(\rho)}{m^{k \rho}} \nonumber  \\
&+ & O\bigg(\frac{1}{\Delta x} \sum_{m^k < h} m^{2k}\bigg).
\end{eqnarray}
Estimating by comparison to the integral we have
\begin{equation} \label{smallbound1}
\sum_{m^k < h} \frac{1}{m^k} = \zeta(k) - \sum_{m^k \geq h} \frac{1}{m^k} = \zeta(k) + O(h^{1/k-1})
\end{equation}
and
\begin{equation} \label{smallbound2}
\sum_{m^k < h} m^{2k} \ll h^{2+1/k}.
\end{equation}

We now turn our attention to estimating the sum over the zeroes in (\ref{bigformula}). Assuming the Riemann hypothesis, we have that
$$ \bigg| \sum_{m^k < h} \sum_{\rho} \frac{S(\rho)}{m^{k \rho}} \bigg| \ll \bigg( \sum_{m^k < h} \frac{1}{m^{k/2}} \bigg) \bigg| \sum_{\rho} S(\rho) \bigg|.$$
Clearly, we can write that
$$ \sum_{m^k < h} \frac{1}{m^{k/2}} \ll g_k(h)$$
where $g_k(h) = \log h$ for $k=2$ and $g_k(h) = 1$ for $k \geq 3$. It thus remains to estimate the sum over the zeroes. We split this into three sums by
$$\sum_{\rho} S(\rho) = \bigg( \sum_{| \gamma | \leq x/h} +\sum_{ x/h <| \gamma | \leq  x/\Delta} + \sum_{  | \gamma | >  x/\Delta }  \bigg)  S(\rho)$$
and provide bounds in the following lemmas. Standard estimates for sums over the zeroes of $\zeta(s)$ can be found in \cite{mv}.

\begin{lem} \label{sumlemma1}
Assume the Riemann hypothesis and let $2 \leq \Delta \leq h \leq x$. Then
$$\sum_{| \gamma | >  x/\Delta} S(\rho) \ll \Delta x^{1/2} \log x$$
\end{lem}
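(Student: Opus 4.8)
The plan is to estimate $S(\rho)$ termwise for the high-ordinate zeros and then sum the resulting bound over all non-trivial zeros using a standard density estimate. First I would bound $S(\rho)$ crudely: since $|x^{\rho+1}| = x^{3/2}$ for all four arguments (each of $x-\Delta$, $x$, $x+h$, $x+h+\Delta$ lies between $x/2$ and $2x$, say, because $\Delta \le h \le x$), the numerator of $S(\rho)$ is $O(x^{3/2})$, and with $|\rho(\rho+1)| \gg \gamma^2$ this gives $S(\rho) \ll x^{3/2}/\gamma^2$. That bound alone, summed over $|\gamma| > x/\Delta$, already yields $\sum_{|\gamma| > x/\Delta} |S(\rho)| \ll x^{3/2} \sum_{|\gamma| > x/\Delta} \gamma^{-2}$. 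Using the classical estimate $\sum_{|\gamma| > T} \gamma^{-2} \ll (\log T)/T$ (which follows from $N(t+1) - N(t) \ll \log t$ via partial summation), the tail sum is $\ll x^{3/2} \cdot \frac{\Delta}{x}\log(x/\Delta) \ll \Delta x^{1/2} \log x$, which is exactly the claimed bound.

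The only subtlety is making sure the four powers appearing in $S(\rho)$ really are each of size $x^{3/2}$ and do not conspire to produce extra cancellation we would need to exploit (we do not — the crude triangle-inequality bound suffices here). Since $2 \le \Delta \le h \le x$, we have $x - \Delta \ge x - h \ge 0$; to keep the arguments bounded away from $0$ one can note that the interval $(x, x+h)$ is the regime of interest and, as the author remarks, the sum over $m^k < h$ restricts $n$ to be comparable to $x$ — but for this lemma it is cleanest simply to observe $x/2 \le x - \Delta$ when $\Delta \le x/2$, and handle $\Delta \in (x/2, x]$ (equivalently $h$ close to $x$) separately, where the whole sum $S_\Delta(x,h)$ is $O(x)$ trivially and the bound $\Delta x^{1/2}\log x \gg x^{3/2}\log x$ holds with room to spare. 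Thus in all cases $|S(\rho)| \ll x^{3/2}|\rho(\rho+1)|^{-1}$.

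The main obstacle is purely bookkeeping: one must invoke the right form of the zero-counting estimate. The clean statement to cite from Montgomery–Vaughan is that the number of zeros with $T \le \gamma \le T+1$ is $O(\log T)$; partial summation against $1/t^2$ then gives $\sum_{\gamma > T} \gamma^{-2} \ll T^{-1}\log T$, and by symmetry the same holds for the sum over $|\gamma| > T$. Applying this with $T = x/\Delta \ge 1$ completes the argument. No analytic input beyond the Riemann hypothesis (used only to put $\beta = 1/2$, hence $|x^{\rho+1}| = x^{3/2}$) and this standard density bound is required.
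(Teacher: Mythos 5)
Your proposal is correct and follows essentially the same route as the paper: bound each term of $S(\rho)$ trivially on RH by $(x+h+\Delta)^{3/2} \ll x^{3/2}$ (using $\Delta \leq h \leq x$), divide by $|\rho(\rho+1)| \gg \gamma^2$, and sum the tail via $\sum_{|\gamma|>T}\gamma^{-2} \ll T^{-1}\log T$ with $T = x/\Delta$. The only difference is your digression about the arguments being bounded away from zero, which is unnecessary here since an upper bound on the sum only requires upper bounds on the four powers.
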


\begin{proof}
On the Riemann hypothesis, one has that
$$|S(\rho)| \leq \frac{4 (x+h+\Delta)^{3/2}}{\gamma^2}.$$
Therefore we have the bound
$$ \sum_{|\gamma| > x/ \Delta} S(\rho) \ll x^{3/2} \sum_{\gamma > x / \Delta} \frac{1}{\gamma^2}.$$
The result now follows from the fact that
$$\sum_{\gamma > T} \frac{1}{\gamma^2} \ll \frac{\log T}{T}.$$
\end{proof}

\begin{lem} \label{sumlemma2}
Assume the Riemann hypothesis and let $2 \leq \Delta \leq h \leq x$. Then
$$\sum_{| \gamma | \leq  x/h} S(\rho) \ll x^{1/2} \Delta \log x $$
\end{lem}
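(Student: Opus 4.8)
The bound of Lemma \ref{sumlemma1} rested on the pointwise estimate $|S(\rho)| \ll x^{3/2}/\gamma^2$, which is useless for zeros of small height. For $|\gamma| \le x/h$ the plan is instead to exploit the cancellation already present in the numerator of $S(\rho)$, which is nothing but a second difference of the function $t \mapsto t^{\rho+1}$.

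Concretely, I would set $f(t) = t^{\rho+1}$ for $t>0$, so that $f'(t) = (\rho+1)t^{\rho}$ and $f''(t) = \rho(\rho+1)t^{\rho-1}$. Since the numerator of $S(\rho)$ is $\bigl[f(x+h+\Delta)-f(x+h)\bigr] - \bigl[f(x)-f(x-\Delta)\bigr]$, two applications of the fundamental theorem of calculus (first on each bracket, then on their difference) give the identity
$$S(\rho) = \int_{0}^{\Delta} \int_{x-\Delta+u}^{x+h+u} v^{\rho-1}\, dv\, du,$$
valid for every nontrivial zero $\rho$, since then $\rho(\rho+1)\neq 0$. On the Riemann hypothesis $|v^{\rho-1}| = v^{-1/2}$; the inner range has length $h+\Delta$ and left endpoint $x-\Delta+u \ge 0$, so
$$\int_{x-\Delta+u}^{x+h+u} v^{-1/2}\, dv = \frac{2(h+\Delta)}{(x+h+u)^{1/2} + (x-\Delta+u)^{1/2}} \ll \frac{h+\Delta}{x^{1/2}},$$
using $x+h+u \ge x$. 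Integrating over $u \in [0,\Delta]$ and recalling $\Delta \le h$ yields the uniform bound $|S(\rho)| \ll \Delta h\, x^{-1/2}$, which, crucially, does not deteriorate as $\gamma \to 0$.

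To finish, I would invoke the standard zero-counting estimate (see \cite{mv}) that there are $\ll T\log T$ zeros with $|\gamma|\le T$, so the number of zeros with $|\gamma| \le x/h$ is $\ll (x/h)\log x$ (note $x/h \ge 1$ here). Multiplying the pointwise bound by this count gives
$$\Bigl|\sum_{|\gamma|\le x/h} S(\rho)\Bigr| \ll \frac{x}{h}\log x \cdot \frac{\Delta h}{x^{1/2}} = \Delta\, x^{1/2}\log x,$$
as required. The only real obstacle is conceptual rather than computational: one must recognise that the naive pointwise bound of Lemma \ref{sumlemma1} has to be replaced by one that stays bounded for small $\gamma$, which is exactly what writing $S(\rho)$ as the double integral above accomplishes; everything after that is routine.
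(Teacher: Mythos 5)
Your proof is correct and is essentially the paper's own argument: both express $S(\rho)$ as the same double integral of $t^{\rho-1}$ (yours is just a reparametrisation of the paper's), deduce the uniform bound $|S(\rho)| \ll h\Delta x^{-1/2}$ on RH, and multiply by $N(x/h) \ll (x/h)\log x$. No substantive differences.
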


\begin{proof}
We write
$$S(\rho) = \int_{x+h}^{x+h+\Delta} \int_{u-h-\Delta}^u t^{\rho-1} dt \ du.$$
Estimating this trivially on the Riemann hypothesis one has
\begin{eqnarray*}
| S(\rho) | & \ll & \int_{x+h}^{x+h+\Delta} \int_{u-h-\Delta}^u t^{-1/2} dt \ du \\
& \ll &  \int_{x+h}^{x+h+\Delta} (u-h-\Delta)^{-1/2} (h+\Delta) du \\
& \ll & h \Delta x^{-1/2}.
\end{eqnarray*}
Thus,
$$\sum_{| \gamma | \leq  x/h} S(\rho) \ll \frac{h \Delta}{x^{1/2}} N(x/h)$$
where $N(T)$ counts the number of zeroes $\rho=1/2+i \gamma$ of the Riemann zeta function with $0< \gamma < T$. The result now follows from the bound
$$N(T) \ll T \log T.$$
\end{proof}

\begin{lem} \label{sumlemma3}
Assume the Riemann hypothesis and let $2 \leq \Delta \leq h \leq x$. Then
$$\sum_{ x/h <| \gamma | \leq  x/\Delta} S(\rho) \ll  h x^{1/2} \log x $$
\end{lem}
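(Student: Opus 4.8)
The plan is to transplant the pointwise bound on $S(\rho)$ from the proof of Lemma \ref{sumlemma1} and simply sum it over a shorter range. Recall that on the Riemann hypothesis one has $|\rho(\rho+1)| \ge \gamma^2$ and $|y^{\rho+1}| = y^{3/2}$, so bounding the four terms in the numerator of $S(\rho)$ by their absolute values and using $2 \le \Delta \le h \le x$ gives
$$|S(\rho)| \le \frac{4(x+h+\Delta)^{3/2}}{\gamma^2} \ll \frac{x^{3/2}}{\gamma^2}$$
for every non-trivial zero $\rho = 1/2 + i\gamma$. This is precisely the bound already exploited in Lemma \ref{sumlemma1}; the only difference here is the range of $\gamma$ over which it will be summed.

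Next I would observe that every $\rho$ occurring in the sum of the present lemma satisfies $|\gamma| > x/h$, so that enlarging the range of summation (all summands are being bounded by non-negative quantities) gives
$$\Big| \sum_{x/h < |\gamma| \le x/\Delta} S(\rho) \Big| \ll x^{3/2} \sum_{|\gamma| > x/h} \frac{1}{\gamma^2}.$$
I would then invoke the standard estimate $\sum_{\gamma > T} \gamma^{-2} \ll (\log T)/T$ — the same one used to close the proof of Lemma \ref{sumlemma1} — with $T = x/h$, yielding $\sum_{|\gamma| > x/h} \gamma^{-2} \ll (h/x)\log(x/h)$, and therefore
$$\Big| \sum_{x/h < |\gamma| \le x/\Delta} S(\rho) \Big| \ll x^{3/2} \cdot \frac{h}{x}\log(x/h) = h x^{1/2}\log(x/h) \ll h x^{1/2}\log x,$$
which is exactly the claimed bound.

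I do not expect a genuine obstacle here; the only point needing a word of care is the degenerate case in which $x/h$ is too small for the shape $(\log T)/T$ to be meaningful. If $h > x/2$ then $h x^{1/2}\log x \gg x^{3/2}\log x$, which already dominates the crude estimate $\sum_{\rho} |S(\rho)| \ll x^{3/2}\sum_{\rho}\gamma^{-2} \ll x^{3/2}$ coming from the first display; otherwise $x/h \ge 2$ and the argument above applies as written. The contrast with Lemma \ref{sumlemma2} is what makes the trichotomy necessary: there the cutoff $x/h$ is an \emph{upper} bound on $|\gamma|$, so the $\gamma^{-2}$ bound is useless near the low-lying zeros and one is forced to pair the trivial size of $S(\rho)$ with a zero-counting estimate; here the cutoff is a \emph{lower} bound on $|\gamma|$, large enough that the tail sum $\sum_{\gamma > x/h}\gamma^{-2}$ is already under control, so Lemma \ref{sumlemma1}'s method carries over directly.
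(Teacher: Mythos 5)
Your proof is correct, but it is not the route the paper takes for this lemma. You recycle the far-tail machinery of Lemma \ref{sumlemma1} --- the pointwise bound $|S(\rho)| \ll x^{3/2}/\gamma^2$ together with $\sum_{\gamma > T}\gamma^{-2} \ll (\log T)/T$ at $T = x/h$ --- and this does land exactly on $h x^{1/2}\log(x/h) \ll h x^{1/2}\log x$; your aside about the degenerate case $x/h < 2$ is also the right thing to say. The paper instead exploits the intermediate smoothing: it writes $S(\rho) = \rho^{-1}\int_{x+h}^{x+h+\Delta}\bigl(t^{\rho} - (t-\Delta-h)^{\rho}\bigr)\,dt$, deduces $|S(\rho)| \ll \Delta x^{1/2}/\gamma$, and then pairs this with the second-moment count $\sum_{\gamma < T}\gamma^{-1} = \tfrac{1}{4\pi}\log^2 T + O(\log T)$, so that the difference $\log^2(x/\Delta) - \log^2(x/h) \ll (h/\Delta)\log x$ restores the factor $h/\Delta$ and yields the same bound $h x^{1/2}\log x$. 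The two arguments trade one power of $\gamma$ for one power of $\Delta$ in opposite directions and meet at the same answer. Your version is arguably cleaner: it shows the cutoff at $x/\Delta$ is unnecessary for the middle range, and indeed that Lemmas \ref{sumlemma1} and \ref{sumlemma3} could be merged into the single estimate $\sum_{|\gamma| > x/h} S(\rho) \ll h x^{1/2}\log x$, which (since $\Delta \leq h$) is all the final assembly of Theorem \ref{main3} actually uses; the paper's decomposition buys the sharper bound $\Delta x^{1/2}\log x$ on the far tail, but that extra sharpness is never exploited.
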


\begin{proof}
We start by writing
$$S(\rho) = \frac{1}{\rho} \bigg( \int_{x+h}^{x+h+\Delta} t^{\rho} - (t - \Delta - h)^{\rho} dt\bigg).$$
We estimate this trivially on the Riemann hypothesis to get
$$S(\rho) \ll \frac{\Delta x^{1/2} }{\gamma}.$$
Therefore, we have that
$$\sum_{ x/h <| \gamma | \leq  x/\Delta} S(\rho) \ll \Delta x^{1/2} \sum_{ x/h <| \gamma | \leq  x/\Delta} \frac{1}{\gamma}.$$
In consideration of the bound
$$\sum_{\gamma < T} \frac{1}{\gamma} = \frac{1}{4 \pi} \log^2 T + O(\log T),$$
it follows that
\begin{eqnarray*}
\sum_{ x/h <| \gamma | \leq  x/\Delta} \frac{1}{\gamma} & = & \frac{1}{4 \pi} \bigg( \log^2(x/\Delta) - \log^2(x/h) \bigg) + O(\log x) \\
& = & \frac{1}{2 \pi} \int_{x/h}^{x/\Delta} \frac{\log t}{t} dt + O(\log x).
\end{eqnarray*}
Estimating the integral by
$$\int_{x/h}^{x/\Delta} \frac{\log t}{t} dt  \ll \bigg(\frac{x}{\Delta} - \frac{x}{h}\bigg) \frac{\log(x/h)}{x/h} \ll \frac{h}{\Delta} \log x $$
completes the proof of the lemma.
\end{proof}

We now return to the explicit formula. Combining the estimates from Lemmas \ref{sumlemma1}, \ref{sumlemma2} and \ref{sumlemma3} and the bounds (\ref{smallbound1}) and (\ref{smallbound2}) with the explicit formula (\ref{bigformula}) gives us that
\begin{eqnarray*} 
\sum_{x-\Delta < n m^k <x+ h + \Delta} w_{x,h,\Delta}(n) \Lambda(n) & =  & \zeta(k) h +O\bigg(\frac{h g(h)}{\Delta} \sqrt{x}{\log x}\bigg) + O(\Delta) \\
& + & O(h^{1/k})  + O\bigg(\frac{h^{2+1/k}}{\Delta x}\bigg).
\end{eqnarray*}
We need to remove, from the above estimate, the contribution that arises from the case where $n = p^r$ and $r \geq 2$. Consider the sum
$$\sum_{\substack{x-\Delta < p^r m^k <x+ h + \Delta \\ r \geq 2}} w_{x,h,\Delta}(n) \log p. $$
Clearly, this sum is bounded above by
$$\sum_{\substack{x-\Delta < p^r m^k <x+ h + \Delta \\ r \geq 2}}\log p \ll \sum_{m^k < x} \bigg( \sum_{(x-\Delta)/m^k < p^r < (x+h+\Delta)/m^k} \log p\bigg). $$
It is trivial to bound the inner sum by 
$$\sum_{(x-\Delta)/m^k < p^r < (x+h+\Delta)/m^k} \log p \ll \bigg(\frac{h}{m^{k/2}}\bigg)^{1/2} \log x,$$
where we have used the fact that the number of $r$-th powers with $r \geq $ in an interval $(x,x+h)$ is $O(\sqrt{h})$. Thus,
$$\sum_{\substack{x-\Delta < p^r m^k <x+ h + \Delta \\ r \geq 2}} w_{x,h,\Delta}(n) \log p \ll h^{1/2} \log^2 x.$$
It follows that
\begin{eqnarray*}
\sum_{x-\Delta < p m^k <x+ h + \Delta} w_{x,h,\Delta}(n) \log p & =  & \zeta(k) h +O\bigg(\frac{h g(h)}{\Delta} \sqrt{x}{\log x}\bigg) + O(\Delta) \\
& + & O(h^{1/2} \log^2 x)  + O\bigg(\frac{h^{2+1/k}}{\Delta x}\bigg).
\end{eqnarray*}

We also need to remove all numbers of the form $p m^k$ that are contained in the intervals $(x-\Delta, x)$ and $(x+h, x+h + \Delta)$. We have by the prime number theorem that
$$\sum_{x- \Delta < pm^k < x} \log p \ll \sum_{m^k < x} \bigg( \sum_{(x-\Delta)/m^k < p < x/m^k} \log p\bigg) \ll \sum_{m^k <x} \frac{\Delta}{m^k} \ll \Delta.$$
Estimating the contribution from the interval $(x+h, x+h + \Delta)$ is similar. Therefore, we have that
\begin{eqnarray*}
\sum_{x < p m^k <x+ h}  \log p & =  & \zeta(k) h +O\bigg(\frac{h g(h)}{\Delta} \sqrt{x}{\log x}\bigg) + O(\Delta) \\
& + & O(h^{1/2} \log^2 x)  + O\bigg(\frac{h^{2+1/k}}{\Delta x}\bigg).
\end{eqnarray*}
First, let $k \geq 3$ and $h = f(x) x^{1/2} \log x$ where $f(x)$ is a function which goes to infinity as $x \rightarrow \infty$. Choosing $\Delta = f(x)^{1/2} x^{1/2}  \log x$ gives us that
\begin{equation} \label{number1}
\sum_{x < p m^k <x+ h}  \log p  =   \zeta(k) h +O\bigg( \frac{h}{f(x)^{1/2}}\bigg).
\end{equation}
In the case where $k=2$, we let $h = f(x) x^{1/2} \log^2 x$. Then, choosing $\Delta = f(x)^{1/2} x^{1/2} \log^2 x$, we get that
\begin{equation} \label{number2}
\sum_{x < p m^2 <x+ h}  \log p  =   \zeta(2) h +O\bigg( \frac{h}{f(x)^{1/2}}\bigg).
\end{equation}
Theorem \ref{main3} now follows from (\ref{number1}), (\ref{number2}) and partial summation.

\section*{Acknowledgements}

The author is gracious of the financial support provided by an Australian Postgraduate Award and an ANU Supplementary Scholarship. He would also like to thank Dr Timothy Trudgian for many helpful conversations.

\bibliographystyle{plain}

\bibliography{biblio}

\end{document}